\theoremstyle{plain}
\newtheorem{theorem}{Theorem}
\newtheorem{lemma}[theorem]{Lemma}
\theoremstyle{remark}
\newtheorem{remark}[theorem]{Remark}
\newtheorem{example}[theorem]{Example}
\numberwithin{equation}{section}
\numberwithin{theorem}{section}
\title{On geometric-type approximations with applications}
\author{Fraser Daly\footnote{Corresponding author. Heriot--Watt University, School of Mathematical and Computer Sciences, Edinburgh EH14 4AS, UK. Email \texttt{f.daly@hw.ac.uk}} \;and Claude Lef\`evre\footnote{Universit\'e Libre de Bruxelles, D\'epartement de Math\'ematique, Campus de la Plaine C. P. 210, B-1050 Bruxelles, Belgium. Email \texttt{claude.lefevre@ulb.be}}}
\date{\today}
\begin{document}

\maketitle

\begin{abstract}
We explore two aspects of geometric approximation via a coupling approach to Stein's method. Firstly, we refine precision and increase scope for applications by convoluting the approximating geometric distribution with a simple translation selected based on the problem at hand. Secondly, we give applications to several stochastic processes, including the approximation of Poisson processes with random time horizons and Markov chain hitting times. Particular attention is given to geometric approximation of random sums, for which explicit bounds are established. These are applied to give simple approximations, including error bounds, for the infinite-horizon ruin probability in the compound binomial risk process.
\end{abstract}

\vspace{12pt}

\noindent{\bf Key words and phrases:} Stein's method; Poisson process with random time horizon; Markov chain hitting time; compound binomial risk process

\vspace{12pt}

\noindent{\bf MSC 2020 subject classification:} 62E17; 60F05; 91G05

\section{Introduction and motivation}

Approximation by the geometric distribution and its generalisations have continued to receive particular attention in recent years, especially in conjunction with Stein's method, a powerful technique for probability approximation that has been used successfully for a variety of theoretical and applied topics. Stein's method was first introduced in \cite{s72}; see also \cite{r11} for a more recent introduction to the area. Stein's method for geometric approximation was developed by Barbour and Gr\"ubel \cite{b95} and Pek\"oz \cite{p96}. We refer the reader to \cite{p13} and references therein for more recent developments. 

Our goal in this paper is twofold. Our first aim is to consider how we can improve upon geometric approximation by incorporating a convolution with another random variable. Though a geometric approximation can yield effective error bounds in some applications, extra precision can be gained, and the range of useful applications increased, by using an approximating distribution that incorporates some additional parameters. Examples of this approach include negative binomial approximation (see \cite{r13,y23} and references therein) and approximation by a geometric sum \cite{d10,d16,d19}. Such approaches naturally lead to additional technical complexity in the evaluation of the approximating distribution or the corresponding error bounds. Our approach of convoluting the approximating geometric distribution with an independent translation is an alternative to these ideas. The mass function of this convolution must be kept simple to use, otherwise the benefits are lost. We are motivated here by the work of R\"ollin \cite{r07} on translated Poisson approximation, where better error bounds are obtained than using a traditional Poisson approximation. Our translation will allow us to introduce additional parameters in the approximating distribution which can then be chosen to gain precision, at the cost of some additional complexity in the approximation. Unlike the usual approach to translated Poisson approximation where the translation is by a constant, we will here allow translation by an independent random variable.

Our second goal is to consider geometric approximation of random sums (i.e., of sums of a random number of random variables) and related applications to risk processes. The case in which the number of terms in the sum is itself geometrically distributed has previously been treated by \cite{p13}; here we consider the more general case. Our results are applied to give a simple approximation, with explicit error bounds, of the infinite-horizon ruin probability in the compound binomial risk process \cite{g88}. As further applications of our general results, we will also give explicit error bounds in the approximation of the number of points in a Poisson process over a random time horizon, and of Markov chain hitting times. 

In these applications, and in particular in the approximation of random sums, there are a number of other approximating distributions available, including compound Poisson approximations (see, for example, \cite{v96}) which offer a flexible approximation framework. A geometric distribution is a simple special case of a compound Poisson distribution. This simplicity leads to more simply stated and easily calculated error bounds compared to a more general compound Poisson setting. As we shall see in the examples and applications of Sections \ref{sec:trans} and \ref{sec:sums} below, there are distributions of interest which are close to geometric, so this approximation is an appropriate and useful one to consider here.  
  
Recall that $Y\sim\textrm{Geom}(p)$ has a geometric distribution if $\mathbb{P}(Y=k)=p(1-p)^k$ for $k\in\mathbb{Z}^+=\{0,1,\ldots\}$. In evaluating the approximation error in the work that follows, we will employ the total variation distance between two non-negative, integer-valued random variables $W$ and $Y$, which can be defined in any of the following equivalent ways:
\begin{multline}\label{dtvdef}
d_{TV}(\mathcal{L}(W),\mathcal{L}(Y)) = \sup_{A\subseteq\mathbb{Z}^+}|\mathbb{P}(W\in A) - \mathbb{P}(Y\in A)|\\
 = \frac{1}{2}\sum_{j\in\mathbb{Z}^+}|\mathbb{P}(W=j)-\mathbb{P}(Y=j)| = \inf \mathbb{P}(W\not=Y)\,,
\end{multline}
where $\mathcal{L}(W)$ denotes the law of $W$, and the infimum is taken over all couplings of $W$ and $Y$ with the required marginals. 

The remainder of this paper is organised as follows. In Section \ref{sec:trans} we introduce Stein's method for geometric approximation, and use these techniques to derive results in the setting where we approximate by the convolution of a geometric distribution and an independent translation. This is illustrated by applications to Poisson processes and Markov chain hitting times. We then consider geometric approximation of random sums in Section \ref{sec:sums}, and apply these results to derive simple approximations for the ruin probability in the compound binomial risk process. 


\section{Translated geometric approximation}\label{sec:trans}

Following Stein's method for geometric approximation \cite{b95,p96}, for each $A\subseteq\mathbb{Z}^+$ we let $g_A:\mathbb{Z}^+\to\mathbb{R}$ be such that $g_A(k)=0$ for 
$k\leq0$ and
\begin{equation}\label{steineq}
I(k\in A)-\mathbb{P}(Y\in A) = (1-p)g_A(k+1) - g_A(k)\,,
\end{equation}
for some $p\in(0,1]$, where $Y\sim\mbox{Geom}(p)$. Recall that Stein's method proceeds by replacing $k$ in \eqref{steineq} by the random variable $W$ to be approximated, taking expectations, and bounding the expression thus obtained from the right-hand side of \eqref{steineq}. Essential ingredients in the final part of this procedure include bounds on the behaviour of the functions $g_A$.  For example, 
\begin{equation}
\label{magic2}\sup_{A\subseteq\mathbb{Z}^+}\sup_{j,k\in\mathbb{Z}}|g_A(j)-g_A(j+k)|\leq\min\left\{|k|,\frac{1}{p}\right\}\,,
\end{equation}
by Lemma 4.2 of \cite{d16} and Lemma 1 of \cite{p96}.

In the work that follows, we let $W$ denote the non-negative, integer-valued random variable we wish to approximate.  We consider the approximation of $W$ by $Y+T$, 
where $Y\sim\mbox{Geom}(p)$ for some $p$, and $T$ is our random translation independent of $W$ and $Y$. Our results will employ a coupling 
construction based on that used by \cite{p96} and \cite{d10}.  To that end, we define the random variable $V$ to be such that
\begin{equation} \label{vdef}
\mathcal{L}(V+T+1) = \mathcal{L}(W|W>T)\,,
\end{equation}
emphasising that $V$ will depend on both $T$ and $W$.

With this setup, we may obtain the following
\begin{theorem} \label{thm1}
Let $W$ and $T$ be independent integer-valued random variables with $W\geq0$.  Suppose $V$ is as defined by \eqref{vdef} and let $Y\sim\mbox{Geom}(p)$ be independent of $T$, where $p=\mathbb{P}(W\leq T)$.  Then
\begin{align}
\label{thm1a} d_{TV}(\mathcal{L}(W),\mathcal{L}(Y+T)) &\leq \mathbb{P}(W<T) + (1-p)\mathbb{E}|W-T-V|\,,\\
\label{thm1b} d_{TV}(\mathcal{L}(W),\mathcal{L}(Y+T)) &\leq \mathbb{P}(W<T) + \frac{1-p}{p}d_{TV}(\mathcal{L}(W-T),\mathcal{L}(V))\,.
\end{align}
\end{theorem}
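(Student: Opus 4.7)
The plan is to mimic the standard Stein--Pek\"oz approach to geometric approximation \cite{p96}, handling the translation $T$ via an extra conditioning step. For a test set $A \subseteq \mathbb{Z}^+$, I would first translate: for each integer $t$, set $B_t = (A - t) \cap \mathbb{Z}^+$ so that $\mathbb{P}(Y + t \in A) = \mathbb{P}(Y \in B_t)$, and let $g_t := g_{B_t}$ denote the Stein solution of \eqref{steineq} for $B_t$, which vanishes on non-positive integers. Evaluating \eqref{steineq} at $k = W - t$, using $I(W - t \in B_t) = I(W \in A)$ on $\{W \geq t\}$ together with the boundary convention on $g_t$, gives the sample-path identity
\[
[I(W \in A) - \mathbb{P}(Y + t \in A)] I(W \geq t) = (1-p) g_t(W - t + 1) - g_t(W - t).
\]

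I would then substitute $t = T$ (permissible since $T$ is independent of $W$) and take expectations, producing the decomposition
\[
\mathbb{P}(W \in A) - \mathbb{P}(Y + T \in A) = \mathbb{E}\bigl[(I(W \in A) - \mathbb{P}(Y + T \in A \mid T)) I(W < T)\bigr] + \mathbb{E}[(1-p) g_T(W - T + 1) - g_T(W - T)],
\]
whose first summand is bounded uniformly in $A$ by $\mathbb{P}(W < T)$. For the second summand, the Pek\"oz-type coupling \eqref{vdef} should supply the identity $\mathbb{E}[g_T(W - T)] = (1-p)\,\mathbb{E}[g_T(V + 1)]$, which follows from $g_T(W - T) = g_T(W - T) I(W > T)$ combined with the fact that $V + T + 1$ carries the conditional law of $W$ given $\{W > T\}$. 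The Stein expression then reduces to $(1-p)\,\mathbb{E}[g_T(W - T + 1) - g_T(V + 1)]$, which the Lipschitz-type bound \eqref{magic2} controls by $(1-p)\,\mathbb{E}\min\{|W - T - V|,\,1/p\}$. Taking suprema over $A$ and applying $\min\{\cdot,\,1/p\} \leq |\cdot|$ yields \eqref{thm1a}; using instead $\min\{\cdot,\,1/p\} \leq (1/p) I(W - T \neq V)$ (valid since $W - T - V$ is integer-valued) and then optimising over the coupling of $V$ with $W - T$ yields \eqref{thm1b} via \eqref{dtvdef}.

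The main obstacle I expect is the Pek\"oz-type identity $\mathbb{E}[g_T(W - T)] = (1-p)\,\mathbb{E}[g_T(V + 1)]$. It rests on a careful reading of the joint law implicit in \eqref{vdef}: the pair $(V, T)$ must be coupled so that, informally, $\mathbb{E}[h(V + T + 1, T)] = \mathbb{E}[h(W, T) \mid W > T]$ for bounded $h$, and applying this with $h(w, t) = g_t(w - t)$ delivers the identity after using $\mathbb{P}(W > T) = 1 - p$. Once that piece is in hand, the remainder of the argument is a routine application of the Stein--Pek\"oz machinery underpinned by \eqref{magic2}.
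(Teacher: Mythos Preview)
Your argument is correct and follows the same Stein--Pek\"oz coupling strategy as the paper, with one organisational difference worth noting. The paper opens with the reduction
\[
d_{TV}(\mathcal{L}(W),\mathcal{L}(Y+T))=d_{TV}(\mathcal{L}(W-T),\mathcal{L}(Y))
\]
(using that $T$ is independent of both $W$ and $Y$), and then applies the Stein equation \eqref{steineq} directly to $W-T$ with a \emph{single} function $g_A$ not indexed by $T$. You instead keep $W$ and $Y+T$ separate, condition on $T$, and carry a $T$-indexed family $g_T=g_{B_T}$ throughout. Both routes arrive at the same Stein remainder $(1-p)\,\mathbb{E}[g(W-T+1)-g(V+1)]$ and finish identically via \eqref{magic2}. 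The payoff of the paper's reduction is that the ``main obstacle'' you flag simply evaporates: once one is working with $W-T$ versus $Y$, the key identity
\[
\mathbb{E}[g_A(W-T)]=(1-p)\,\mathbb{E}[g_A(W-T)\mid W>T]=(1-p)\,\mathbb{E}[g_A(V+1)]
\]
requires only the \emph{marginal} relation $\mathcal{L}(V+1)=\mathcal{L}(W-T\mid W>T)$, and no joint statement about $(V,T)$ is needed. Your route, by contrast, is arguably more careful about what the conditioning on $T$ entails, at the cost of having to articulate the joint interpretation of \eqref{vdef} explicitly.
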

\begin{proof}
Since $T$ is independent of $W$ and $Y$, 
\[
d_{TV}(\mathcal{L}(W),\mathcal{L}(Y+T))=d_{TV}(\mathcal{L}(W-T),\mathcal{L}(Y))\,.
\]
Motivated by the argument of (3.11) of \cite{r07}, we now write
\begin{align*}
d_{TV}(\mathcal{L}(W-T),\mathcal{L}(Y))&=\sup_{A\subseteq\mathbb{Z}}\left|\mathbb{P}(W-T\in A)-\mathbb{P}(Y\in A)\right|\\
&\leq \mathbb{P}(W<T) + \sup_{A\subseteq\mathbb{Z}^+}\left|\mathbb{E}[(1-p)g_A(W-T+1)-g_A(W-T)]\right|\,,
\end{align*}
where we recall that $g_A(k)=0$ for $k\leq0$ and use \eqref{steineq} with $k=W-T$ for the final expression. Now, conditioning on the value of $W$,
\[
\mathbb{E}[g_A(W-T)] = (1-p)\mathbb{E}[g_A(W-T)|W>T] +p\mathbb{E}[g_A(W-T)|W\leq T] = (1-p)\mathbb{E}[g_A(V+1)]\,,
\]
using the definitions of $g_A$ and $V$. Our bound \eqref{thm1a} then follows from \eqref{magic2} when writing
\[
|\mathbb{E}[g_A(W-T+1)-g_A(V+1)]|\leq\mathbb{E}|W-T-V|\,.
\]
Similarly, for \eqref{thm1b} we let $I(\cdot)$ denote an indicator function and write
\begin{align*}
|\mathbb{E}[g_A(W-T+1)-g_A(V+1)]|&=|\mathbb{E}[\left\{g_A(W-T+1)-g_A(V+1)\right\}I(W-T\not=V)]|\\
&\leq\frac{1}{p}d_{TV}(\mathcal{L}(W-T),\mathcal{L}(V))\,,
\end{align*}
using the definition \eqref{dtvdef} of total variation distance and the bound \eqref{magic2}.
\end{proof}

\begin{remark}
Depending on the situation, it is sometimes more natural to define a geometric random variable on $\{1,2\ldots\}$ rather than $\{0,1,\ldots\}$. In our framework it is straightforward to switch between the two definitions by simply altering the definition of $T$.
\end{remark}

In the remainder of this section we consider two applications of Theorem \ref{thm1}, to approximation of the number of points observed in a Poisson process over a random time horizon and of Markov chain hitting times. For later use, we recall that a random variable $U$ is said to be stochastically larger than a random variable $V$ if $\mathbb{P}(U>t)\geq\mathbb{P}(V>t)$ for all $t$, and that this is equivalent to the existence of a probability space on which we may construct the pair $(U^\prime,V^\prime)$ such that $\mathcal{L}(U^\prime)=\mathcal{L}(U)$, $\mathcal{L}(V^\prime)=\mathcal{L}(V)$, and $U^\prime\geq V^\prime$ almost surely; see Theorem 1.A.1 of \cite{s07}. We denote this by $U\geq_{st}V$.

\subsection{Application: Poisson processes with random time horizon}

Let $\{N(t):t\geq0\}$ be a homogeneous Poisson process of rate $\lambda$, and $\tau$ be a non-negative random variable independent of this process. We consider the approximation of $W=N(\tau)$ by $Y+T$, where $Y$ has a geometric distribution and $T$ is a non-negative, integer-valued random variable independent of $W$ and $Y$. In this we are motivated by \cite{d16}, who previously considered geometric-type approximations in this setting, and by applications considered by \cite{a02} and others in which Poisson processes with random time horizons play a role. We expect a geometric-type approximation to be relevant in cases of practical importance, since if $\tau$ has an exponential distribution then $N(\tau)$ is precisely geometrically distributed.

Let $\xi_0=0$ and, for $i\geq1$, let $\xi_i$ be the time of the $i$th event in our Poisson process, so that $N(t)=\max\{i:\xi_i\leq t\}$. We define $V=N(\sigma_T)$, where 
\[
\mathcal{L}(\sigma_T)=\mathcal{L}(\tau-\xi_{T+1}|\tau>\xi_{T+1})
\]
and $\sigma_T$ depends on the random variables $\{N(t):t\geq0\}$, through $\xi_{T+1}$, and $\tau$. This $V$ satisfies \eqref{vdef}. To see this, note that conditioning on $N(\tau)>T$ is equivalent to conditioning on the $(T+1)$th event occurring before time $\tau$, i.e., that $\xi_{T+1}\leq\tau$. Conditioning on this, the number of events we have seen up to time $\tau$ is then $T+1+V$. 

We thus obtain from \eqref{thm1a} that
\begin{equation}\label{renewal}
d_{TV}(\mathcal{L}(N(\tau)),\mathcal{L}(Y+T))\leq\mathbb{P}(\tau<\xi_T)+(1-p)\mathbb{E}|N(\tau)-T-N(\sigma_T)|\,,
\end{equation}
where $Y\sim\mbox{Geom}(p)$ and $p=\mathbb{P}(\tau<\xi_{T+1})$.

Consider, for simplicity, the case where $T=0$ almost surely, and write $\sigma=\sigma_0$. Note that 
\begin{equation}\label{sigma}
\mathbb{P}(\sigma>u)=\frac{\mathbb{P}(\tau>u+\xi_1)}{\mathbb{P}(\tau>\xi_1)}\,,
\end{equation}
and
\begin{align*}
\mathbb{E}[\sigma]&=\int_0^\infty\mathbb{P}(\sigma>u)\,\text{d}u=\frac{1}{1-p}\mathbb{E}\int_0^\infty\mathbb{P}(\tau>\xi_1+u|\xi_1)\,\text{d}u\\
&=\frac{\mathbb{E[\tau]}}{1-p}-\frac{1}{1-p}\int_0^\infty\int_0^{x}\mathbb{P}(\tau>u)\lambda e^{-\lambda x}\,\text{d}u\,\text{d}x\\
&=\frac{\mathbb{E[\tau]}}{1-p}-\frac{1}{\lambda}\,,
\end{align*}
where $p=\mathbb{P}(\tau<\xi_1)=\mathbb{E}[e^{-\lambda\tau}]$.

In the case where $\sigma\geq_{st}\tau$ the bound \eqref{renewal} simplifies considerably, since we then have
\begin{equation}\label{eq:nwu_mean}
\mathbb{E}|N(\tau)-N(\sigma)|=\mathbb{E}N(\sigma-\tau)=\lambda\mathbb{E}[\sigma-\tau]
=\frac{\lambda p\mathbb{E}[\tau]-(1-p)}{1-p}=\frac{p(1+\lambda\mathbb{E}[\tau])-1}{1-p}\,.
\end{equation}
Similarly, if $\tau\geq_{st}\sigma$ we have 
\begin{equation}\label{eq:nbu_mean}
\mathbb{E}|N(\tau)-N(\sigma)|=\frac{1-p(1+\lambda\mathbb{E}[\tau])}{1-p}\,.
\end{equation}

Observe that if $\tau$ satisfies the new worse than used (NWU) property that $\mathbb{P}(\tau>u)\mathbb{P}(\tau>v)\leq\mathbb{P}(\tau>u+v)$ for all $u,v\geq0$, the independence of $\tau$ and $\xi_1$ combined with \eqref{sigma} implies that $\sigma\geq_{st}\tau$. Similarly, if $\tau$ satisfies the new better than used (NBU) property that $\mathbb{P}(\tau>u)\mathbb{P}(\tau>v)\geq\mathbb{P}(\tau>u+v)$ for all $u,v\geq0$, we have that $\tau\geq_{st}\sigma$. By combining the bound \eqref{renewal} with the choice $p=\mathbb{E}[e^{-\lambda\tau}]$ and the expectations calculated in \eqref{eq:nwu_mean} and \eqref{eq:nbu_mean} we thus have the following
\begin{theorem}\label{thm_po}
Let $\{N(t):t\geq0\}$ be a homogeneous Poisson process of rate $\lambda$. Let $\tau$ be a non-negative random variable independent of this process.
\begin{enumerate}
\item[(a)] If $\tau$ is NBU,
\[
d_{TV}(\mathcal{L}(N(\tau)),\text{Geom}(\mathbb{E}[e^{-\lambda\tau}]))\leq1-\mathbb{E}[e^{-\lambda\tau}](1+\lambda\mathbb{E}[\tau])\,.
\]
\item[(b)] If $\tau$ is NWU,
\[
d_{TV}(\mathcal{L}(N(\tau)),\text{Geom}(\mathbb{E}[e^{-\lambda\tau}]))\leq\mathbb{E}[e^{-\lambda\tau}](1+\lambda\mathbb{E}[\tau])-1\,.
\]
\end{enumerate}
\end{theorem}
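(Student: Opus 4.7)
The plan is to treat Theorem \ref{thm_po} as a direct corollary of bound \eqref{renewal} in the special case $T = 0$, together with the stochastic-order reductions already derived in the preceding paragraphs. Since $\xi_0 = 0$ and $\tau \geq 0$, the term $\mathbb{P}(\tau < \xi_0)$ in \eqref{renewal} vanishes, and $\mathbb{P}(\tau < \xi_1) = \mathbb{E}[e^{-\lambda\tau}]$ because $\xi_1 \sim \mathrm{Exp}(\lambda)$ is independent of $\tau$. Thus \eqref{renewal} specialises to
\[
d_{TV}(\mathcal{L}(N(\tau)), \mathrm{Geom}(\mathbb{E}[e^{-\lambda\tau}])) \;\leq\; (1-p)\,\mathbb{E}|N(\tau) - N(\sigma)|, \qquad p = \mathbb{E}[e^{-\lambda\tau}],
\]
and the task reduces to bounding the right-hand side under each tail condition.

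The second step is to translate the NBU/NWU hypothesis into a stochastic ordering between $\sigma$ and $\tau$. Using \eqref{sigma} and conditioning on $\xi_1$,
\[
\mathbb{P}(\sigma > u) \;=\; \frac{\mathbb{E}[\mathbb{P}(\tau > u + \xi_1 \mid \xi_1)]}{\mathbb{P}(\tau > \xi_1)}.
\]
If $\tau$ is NWU, the numerator dominates $\mathbb{P}(\tau > u)\,\mathbb{P}(\tau > \xi_1)$, giving $\sigma \geq_{st} \tau$; if $\tau$ is NBU, the inequality reverses and $\tau \geq_{st} \sigma$. In either case I would invoke Theorem 1.A.1 of \cite{s07} to couple $\sigma$ and $\tau$ on a common probability space so that one dominates the other almost surely, and then build the Poisson process independently of this coupling.

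The third step is the computation of $\mathbb{E}|N(\tau) - N(\sigma)|$, which is where \eqref{eq:nwu_mean} and \eqref{eq:nbu_mean} do the work. Under the almost-sure ordering, $|N(\sigma) - N(\tau)|$ equals the number of Poisson events in the interval between the smaller and larger of $\sigma, \tau$, whose expectation is $\lambda|\mathbb{E}[\sigma - \tau]|$; the explicit value of $\mathbb{E}[\sigma]$ already derived then yields the closed forms in \eqref{eq:nwu_mean} and \eqref{eq:nbu_mean}. Multiplying by the prefactor $1-p$ collapses the denominator and gives $\pm(p(1 + \lambda\mathbb{E}[\tau]) - 1)$, which is precisely the bound claimed in parts (a) and (b) after substituting $p = \mathbb{E}[e^{-\lambda\tau}]$.

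There is really no significant obstacle: the analytic heavy lifting (the Stein bound \eqref{renewal}, the identity \eqref{sigma}, the evaluation of $\mathbb{E}[\sigma]$, and the alternative expressions \eqref{eq:nwu_mean}--\eqref{eq:nbu_mean}) is already in place before the theorem is stated, and the proof is essentially bookkeeping. The one place where care is needed is ensuring that the stochastic ordering can be realised together with a Poisson process so that the intuitive formula $\mathbb{E}|N(\tau) - N(\sigma)| = \lambda |\mathbb{E}[\sigma - \tau]|$ is rigorously justified, but this is standard given that the Poisson process can be taken independent of the $(\sigma,\tau)$ coupling without altering the marginal law of $V = N(\sigma)$ required by \eqref{vdef}.
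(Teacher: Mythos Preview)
Your proposal is correct and follows essentially the same route as the paper: the theorem is stated immediately after the paper has assembled \eqref{renewal} with $T=0$, the identification $p=\mathbb{E}[e^{-\lambda\tau}]$, the stochastic-order implications of the NBU/NWU hypotheses via \eqref{sigma}, and the evaluations \eqref{eq:nwu_mean}--\eqref{eq:nbu_mean}, so the proof is exactly the bookkeeping you describe. Your explicit remark about coupling $(\sigma,\tau)$ and then building the Poisson process independently is a nice clarification the paper leaves implicit.
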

If $\tau$ is exponentially distributed (which is trivially both NBU and NWU) then $\tau$ and $\sigma=\sigma_0$ as above have the same distribution, and the upper bounds of Theorem \ref{thm_po} are each zero, reflecting the fact that $N(\tau)$ has a geometric distribution. 

The upper bound of Theorem \ref{thm_po}(a) was previously established in Section 2.2 of \cite{d16}, but only under the stronger condition that $\tau$ has increasing failure rate.

We conclude this section with two examples illustrating Theorem \ref{thm_po}.
 
\begin{example}
Let $\tau$ have a gamma distribution, and note that $N(\tau)$ is thus negative binomial. For simplicity of presentation we normalise $\tau$ to have unit mean, with density function $x\mapsto\frac{\beta^\beta}{\Gamma(\beta)}x^{\beta-1}e^{-\beta x}$ for $x>0$, where $\beta>0$ is a parameter and $\Gamma(\cdot)$ is the usual gamma function. We have that $\mathbb{E}[e^{-\lambda\tau}]=\left(1+\frac{\lambda}{\beta}\right)^{-\beta}$ in this case. Note that $\tau$ has decreasing failure rate when $\beta<1$ (and is thus NWU), constant failure rate when $\beta=1$ (the exponential case), and increasing failure rate when $\beta>1$ (and is thus NBU). Applying both parts of Theorem \ref{thm_po} we have
\begin{equation}\label{eq:NegBin}
d_{TV}\left(\mathcal{L}\left(N(\tau)\right),\mbox{Geom}\left(\left(1+\frac{\lambda}{\beta}\right)^{-\beta}\right)\right)\leq\left|1-(1+\lambda)\left(1+\frac{\lambda}{\beta}\right)^{-\beta}\right|\,.
\end{equation}
The upper bound \eqref{eq:NegBin} is illustrated in Figure \ref{fig:1} for several values of $\lambda$. As expected, this upper bound is zero when $\beta=1$, and is small when $\beta$ is close to $1$.
\begin{figure}
\begin{center}
\includegraphics[scale=0.5]{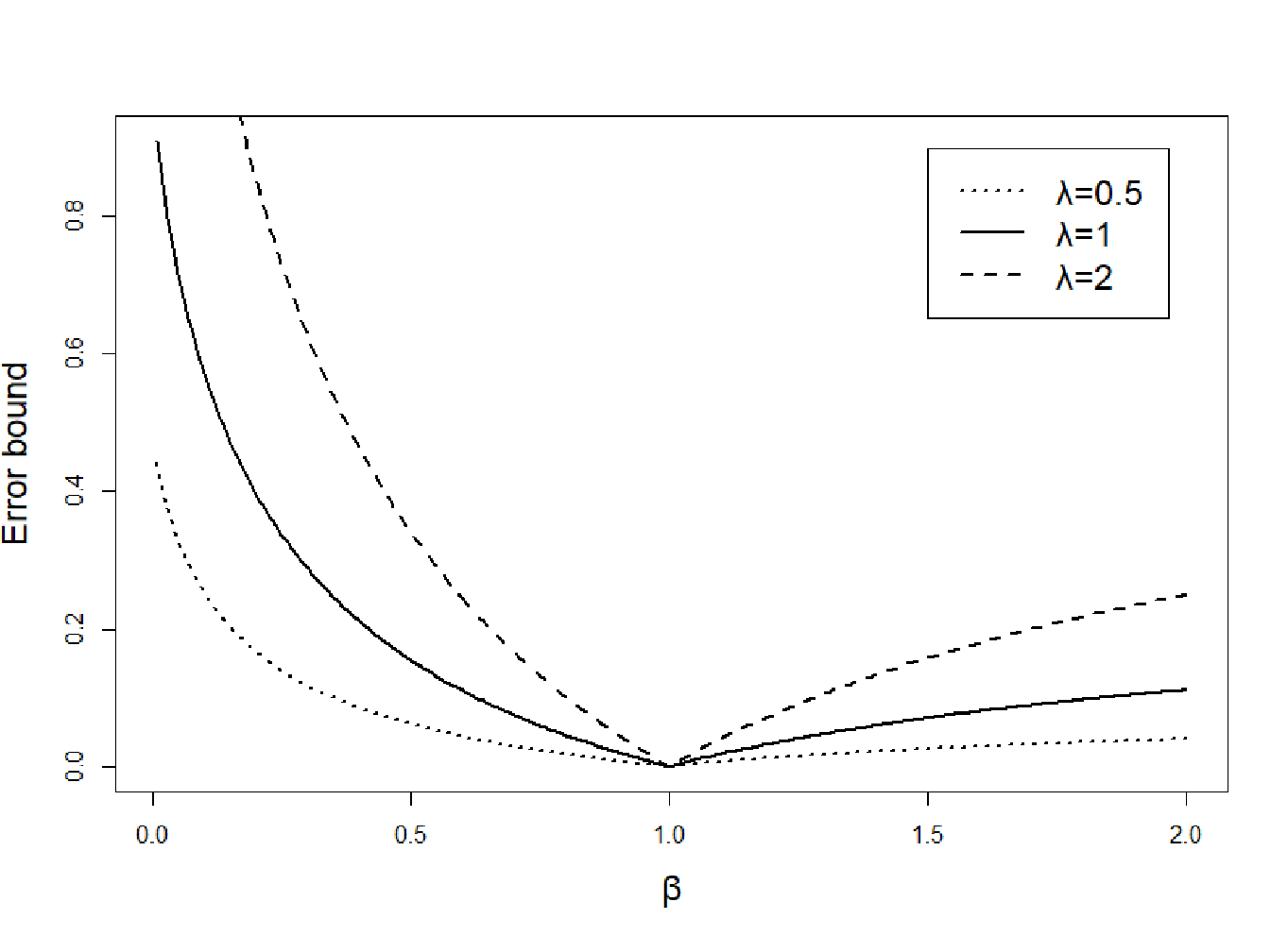}
\end{center}
\caption{The upper bound \eqref{eq:NegBin} as a function of $\beta$ for $\lambda=0.5,1,2$}\label{fig:1}
\end{figure}
\end{example}

\begin{example}
Let $\tau$ be uniformly distributed on the interval $[a,b]$ for some $0\leq a<b$, so that $\tau$ has increasing failure rate (and is thus NBU) and satisfies $\mathbb{E}[e^{-\lambda\tau}]=\frac{e^{-\lambda a}-e^{-\lambda b}}{\lambda(b-a)}$ for all $\lambda>0$. Hence, by Theorem \ref{thm_po}(a),
\[
d_{TV}\left(\mathcal{L}\left(N(\tau)\right),\mbox{Geom}\left(\frac{e^{-\lambda a}-e^{-\lambda b}}{\lambda(b-a)}\right)\right)\leq1-\frac{e^{-\lambda a}-e^{-\lambda b}}{b-a}\left(\lambda+\frac{a+b}{2}\right)\,.
\] 
If $\lambda a$ and $\lambda b$ are small, writing $e^{-\lambda a}\approx 1-\lambda a$ and $e^{-\lambda b}\approx 1-\lambda b$ we see that this upper bound is small when $a+b\approx\frac{2(1-\lambda^2)}{\lambda}$. 
\end{example}

\subsection{Application: Markov chain hitting times}

We now let $W$ be the hitting time of an ergodic Markov chain to a set of states $A$. That is, we let $(\xi_0,\xi_1,\ldots)$ be a Markov chain and $W=\min\{i\geq0:\xi_i\in A\}$ for some fixed subset $A$ of the state space. We will write $P_{ij}^{(n)}=\mathbb{P}(\xi_n=j|\xi_0=i)$ for the $n$-step transition probabilities and $\pi$ for the stationary distribution of our Markov chain, with $\pi(A)$ denoting the measure of a set $A$ according to $\pi$; abusing notation, we will write $\pi_i$ for $\pi(\{i\})$.   

For some distribution $F$ on this state space, we let $\xi_0\sim F$, and define $T=T(F)$ to be a stationary time for this initial distribution, independent of $W$. That is, $T$ is such that $\mathcal{L}(\xi_T|\xi_0\sim F)$ is $\pi$. In line with Theorem \ref{thm1}, we consider the approximation of $W$ by $Y+T$, where $Y\sim\mbox{Geom}(p)$ is a geometric random variable with parameter $p=\mathbb{P}(W\leq T)$ which is independent of $T$. We will use \eqref{thm1b} to give a bound. To that end, we construct the random variables $W-T$ and $V$ as follows: Consider the Markov chain $(X,\xi_0^\prime,\xi_1^\prime,\ldots)$, where the initial state $X$ is sampled according to $\pi$. Since $T$ is a stationary time, we may construct $W-T$ as the number of time steps required to hit $A$ from an initial stationary distribution. That is, we write $W-T=\min\{i\geq0:\xi_i^\prime\in A\}$. Now consider another Markov chain $(Y,\xi_0^{\prime\prime},\xi_1^{\prime\prime},\ldots)$ whose initial state $Y$ is sampled according to $\pi$ restricted to $A^\text{c}$, the complement of $A$. We may then write $V=\min\{i\geq0:\xi_i^{\prime\prime}\in A\}$ since, in conditioning on the event that $W>T$, we consider the number of steps from such an initial state until we hit $A$ in addition to the $T+1$ steps already taken to reach the state $\xi_0^{\prime\prime}$.

With these random variables, the maximal coupling argument of Theorem 2 of \cite{p96} gives
\[
d_{TV}(\mathcal{L}(W-T),\mathcal{L}(V))\leq\frac{1}{\pi(A^{\text{c}})}\sum_{i,j\in A}\pi_i\sum_{n=1}^\infty|P_{ij}^{(n)}-\pi_j|\,.
\]   
The following bound is then immediate from \eqref{thm1b}.
\begin{theorem}
Let $(\xi_0,\xi_1,\ldots)$ be the Markov chain above, $W=\min\{i\geq0:\xi_i\in A\}$, and $T$ be a stationary time as above. With $Y\sim\mbox{Geom}(p)$ for $p=\mathbb{P}(W\leq T)$,
\[
d_{TV}(\mathcal{L}(W),\mathcal{L}(Y+T))\leq\mathbb{P}(W<T)+\frac{1-p}{p\pi(A^{\text{c}})}\sum_{i,j\in A}\pi_i\sum_{n=1}^\infty|P_{ij}^{(n)}-\pi_j|\,.
\]
\end{theorem}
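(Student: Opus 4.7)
The plan is to apply bound \eqref{thm1b} of Theorem \ref{thm1} directly, so the work reduces to two tasks: (i) exhibit a random variable $V$ satisfying \eqref{vdef} that is natural in this Markov chain setting, and (ii) bound $d_{TV}(\mathcal{L}(W-T),\mathcal{L}(V))$ in terms of the transition and stationary data of the chain.

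For (i), the key observation is that, because $T$ is a stationary time, the marginal distribution of $\xi_T$ is $\pi$. Conditioning on the event $\{W>T\}$ forces $\xi_0,\ldots,\xi_T\in A^{\text{c}}$ and in particular $\xi_T\in A^{\text{c}}$, so the residual chain effectively restarts from $\pi$ restricted to $A^{\text{c}}$. Taking $V$ to be the subsequent number of steps until this restarted chain first enters $A$, we obtain $\mathcal{L}(V+T+1)=\mathcal{L}(W|W>T)$ as required, while at the same time $W-T$ may be realised as the hitting time of $A$ from the full stationary distribution $\pi$.

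For (ii), both $W-T$ and $V$ are hitting times of $A$ that differ only through their initial distributions: $\pi$ versus $\pi|_{A^{\text{c}}}$. These two distributions agree, up to the normalising factor $1/\pi(A^{\text{c}})$, on $A^{\text{c}}$, so the two chains can be coupled to coincide whenever the common starting state lies in $A^{\text{c}}$, contributing nothing to the total variation distance. The entire discrepancy therefore comes from the event that the $\pi$-started chain begins in $A$; on this event $W-T=0$, whereas a chain started from $i\in A$ must be matched against the stationary behaviour of future visits to $A$. Feeding this into the maximal coupling argument of Theorem 2 of \cite{p96} produces the bound $d_{TV}(\mathcal{L}(W-T),\mathcal{L}(V))\leq \pi(A^{\text{c}})^{-1}\sum_{i,j\in A}\pi_i\sum_{n=1}^\infty|P_{ij}^{(n)}-\pi_j|$.

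The main technical obstacle lies in step (ii): carrying out the maximal coupling between chains started from $\pi$ and from $\pi|_{A^{\text{c}}}$ so that the contribution from initial states in $A$ is isolated and expressed cleanly through the differences $|P_{ij}^{(n)}-\pi_j|$ with $i,j\in A$, rather than through coarser mixing quantities. Once that estimate is in hand, substituting it into \eqref{thm1b}, where the additive term $\mathbb{P}(W<T)$ already appears, immediately yields the claimed inequality.
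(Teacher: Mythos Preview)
Your proposal is correct and follows the paper's argument essentially line for line: identify $W-T$ with the hitting time of $A$ from the stationary distribution $\pi$ and $V$ with the hitting time from $\pi|_{A^{\text{c}}}$, invoke the maximal coupling bound of Theorem~2 of \cite{p96} for $d_{TV}(\mathcal{L}(W-T),\mathcal{L}(V))$, and substitute into \eqref{thm1b}. The only extra detail in the paper is that the restarted chain is written as $(Y,\xi_0'',\xi_1'',\ldots)$ with $Y\sim\pi|_{A^{\text{c}}}$ and $V=\min\{i\geq0:\xi_i''\in A\}$, so that the ``$+1$'' in \eqref{vdef} is made explicit.
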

This result generalises Theorem 2 of \cite{p96}, where the hitting time from an initial stationary distribution only is considered, in which case our $T$ is equal to zero almost surely and $p=\pi(A)$.


\section{Geometric approximation of random sums and the compound binomial risk process} \label{sec:sums}

Throughout this section we let $W=\sum_{i=1}^NX_i$, where $X_1,X_2,\ldots$ are independent and identically distributed, positive integer-valued random variables with $\mathbb{E}[X_1]=\mu\geq1$. We let $N$ be a non-negative, integer-valued random variable independent of the $X_i$. For simplicity we restrict ourselves to the case where the $X_i$ are identically distributed, but this condition can easily be relaxed.

We will consider the approximation of $W$ by a geometric random variable $Y$ in Section \ref{subsec:sums}, and then apply our results in Section \ref{subsec:ruin} to derive simple and explicit bounds for the infinite-horizon ruin probability in the compound binomial risk process. 

In this section we focus on geometric approximation (i.e., $T=0$ almost surely in the setting of Theorem \ref{thm1}), since construction of the random variable $V$ defined in \eqref{vdef} seems to be natural here only when $T=0$. This case is also sufficient to give simple and explicit bounds on our ruin probability of interest, which is the principal application of this section.

\subsection{Geometric approximation of random sums}\label{subsec:sums}

With $W$ the random sum defined above, we now derive geometric approximation bounds for $W$. For later use, we define $N^{(0)}$ such that $\mathcal{L}(N^{(0)})=\mathcal{L}(N|N>0)$.

We consider the approximation of $W$ by $Y\sim\mbox{Geom}(p)$. In line with the discussion in Section \ref{sec:trans} in the case where $T=0$ almost surely, we choose the parameter $p$ to be
\begin{equation}\label{pdef}
p = \mathbb{P}(W=0) = \mathbb{P}(N=0)\,,
\end{equation}
and $V$ to be such that
\[
\mathcal{L}(V+1) = \mathcal{L}(W|N>0) = \mathcal{L}\left(\sum_{j=1}^{N^{(0)}}X_j\right)\,,
\]
so that we may choose $V=\sum_{j=1}^{N^{(0)}}X_j-1$. With this choice we have that
\[
\mathbb{E}|W-V|=\mathbb{E}\left|\sum_{i=1}^NX_i-\sum_{j=1}^{N^{(0)}}X_j+1\right|=\mathbb{E}\left|\sum_{i=N+1}^{N^{(0)}}X_i-1\right|\,,
\]
since $N^{(0)}\geq_{st}N$, and so we may construct these on the same probability space in such a way that $N^{(0)}\geq N$ almost surely.

Furthermore, if $N^{(0)}\geq_{st}N+1$ then we may construct our random variables such that $\sum_{i=N+1}^{N^{(0)}}X_i$ is almost surely greater than $1$, since each $X_i$ is at least $1$ almost surely. In this case we thus have that 
\[
\mathbb{E}|W-V|=\mu\mathbb{E}[N^{(0)}-N]-1=\frac{\mu\mathbb{P}(N=0)\mathbb{E}[N]}{\mathbb{P}(N>0)}-1=\frac{\mu p\mathbb{E}[N]}{1-p}-1\,.
\]
From the definition, we note that $N^{(0)}\geq_{st}N+1$ if
\begin{equation}\label{hazard}
\frac{\mathbb{P}(N=j)}{\mathbb{P}(N>j)}\leq\frac{p}{1-p}\,,
\end{equation}
for all $j\in\mathbb{Z}^+$. That is, $N^{(0)}\geq_{st}N+1$ if $N$ is larger than $Y$ in the hazard rate ordering; see Section 1.B of \cite{s07}.

Combining all these ingredients, we may apply \eqref{thm1a} with $T=0$ to give
\begin{theorem} \label{thm:sum}
Let $W=\sum_{i=1}^NX_i$ be as above and $p$ be given by \eqref{pdef}. Then
\[
d_{TV}(\mathcal{L}(W),\mbox{Geom}(p)) \leq (1-p)\mathbb{E}\left|\sum_{i=N+1}^{N^{(0)}}X_i-1\right|\,.
\]
Moreover, if \eqref{hazard} holds for all $j\in\mathbb{Z}^+$, then
\begin{equation}\label{geomsum}
d_{TV}(\mathcal{L}(W),\mbox{Geom}(p)) \leq p(\mu\mathbb{E}[N]+1)-1\,.
\end{equation} 
\end{theorem}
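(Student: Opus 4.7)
The plan is to specialise Theorem \ref{thm1} to the case $T=0$ almost surely, and then translate the resulting expression into the random-sum language. First I would verify the hypotheses: with $T=0$ the condition $p=\mathbb{P}(W\leq T)$ becomes $p=\mathbb{P}(W=0)$, and since the $X_i$ are positive integer-valued this is exactly $\mathbb{P}(N=0)$, matching \eqref{pdef}. The boundary term in \eqref{thm1a} is $\mathbb{P}(W<T)=\mathbb{P}(W<0)=0$ because $W\geq0$, so the bound reduces to $(1-p)\mathbb{E}|W-V|$.

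Next I would identify $V$ through its defining property \eqref{vdef}: $\mathcal{L}(V+1)=\mathcal{L}(W\mid W>0)=\mathcal{L}(W\mid N>0)$, and the latter coincides in law with $\sum_{j=1}^{N^{(0)}}X_j$ for $N^{(0)}$ as defined in the theorem. It is therefore natural to realise $V$ on the same probability space as $W$ by writing $V=\sum_{j=1}^{N^{(0)}}X_j-1$, using the same i.i.d.\ sequence $(X_i)$. The coupling step requires $N^{(0)}\geq_{st}N$, which I would check directly from the definition of $N^{(0)}$ as $N$ conditioned on $\{N>0\}$: for any $k\geq0$, the tail $\mathbb{P}(N^{(0)}>k)=\mathbb{P}(N>k)/\mathbb{P}(N>0)\geq\mathbb{P}(N>k)$. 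By the standard coupling characterisation of $\geq_{st}$, I may assume $N^{(0)}\geq N$ a.s., so that
\[
W-V=\sum_{i=1}^N X_i-\sum_{j=1}^{N^{(0)}}X_j+1=-\sum_{i=N+1}^{N^{(0)}}X_i+1\,,
\]
and the first bound follows by taking absolute values and expectations.

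For the refined bound I would show that the hazard-rate condition \eqref{hazard} upgrades the stochastic ordering to $N^{(0)}\geq_{st}N+1$. The rewriting $\mathbb{P}(N=j)/\mathbb{P}(N>j)\leq p/(1-p)$ is equivalent to $\mathbb{P}(N>j+1)/\mathbb{P}(N>j)\geq 1-p/(1-p)\cdot\mathbb{P}(N\leq j)$ arranged so that $\mathbb{P}(N^{(0)}>k)=\mathbb{P}(N>k)/(1-p)\geq\mathbb{P}(N>k-1)$ for each $k\geq1$, which is exactly $N^{(0)}\geq_{st}N+1$. With the corresponding coupling, $N^{(0)}\geq N+1$ a.s., so $\sum_{i=N+1}^{N^{(0)}}X_i\geq X_{N+1}\geq1$ a.s. and I may drop the absolute values. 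A Wald-type computation (using independence of $N,N^{(0)}$ from the $X_i$) then gives $\mathbb{E}|W-V|=\mu\mathbb{E}[N^{(0)}-N]-1$, and $\mathbb{E}[N^{(0)}]=\mathbb{E}[N]/(1-p)$ reduces this to $\mu p\mathbb{E}[N]/(1-p)-1$. Multiplying by $1-p$ yields \eqref{geomsum}.

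The routine parts are the Wald/identity manipulations; the one slightly delicate step is the equivalence between the pointwise hazard inequality \eqref{hazard} and the stochastic bound $N^{(0)}\geq_{st}N+1$, which I would present carefully via the explicit tail formula for $N^{(0)}$ rather than quoting the hazard-rate order abstractly. Everything else is bookkeeping once Theorem \ref{thm1} is in hand.
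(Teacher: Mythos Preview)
Your proposal is correct and follows essentially the same route as the paper: specialise Theorem~\ref{thm1} with $T=0$, identify $V=\sum_{j=1}^{N^{(0)}}X_j-1$ via the coupling $N^{(0)}\geq N$ a.s., and for the second bound upgrade to $N^{(0)}\geq_{st}N+1$ under \eqref{hazard} before applying Wald. The only cosmetic issue is that your intermediate rewriting of the hazard condition is garbled; the clean equivalence is that \eqref{hazard} at index $k$ is exactly $\mathbb{P}(N>k)/(1-p)\geq\mathbb{P}(N>k-1)$, i.e.\ $\mathbb{P}(N^{(0)}>k)\geq\mathbb{P}(N+1>k)$, which is what you state in the end.
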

One case which has received particular attention in the literature is that in which $N\sim\mbox{Geom}(r)$ has a geometric distribution, so that $W$ is a geometric sum. Note that in this case $\mathcal{L}(N^{(0)})=\mathcal{L}(N+1)$, so that the bound \eqref{geomsum} applies. Geometric approximation for geometric sums has also been considered by \cite{p13}, who show in their Theorem 3.2 that
\begin{equation}\label{geomsum2}
d_{TV}(\mathcal{L}(W),\mbox{Geom}(p^\prime))\leq\frac{1}{2}\min\left\{1,r\left[1+\sqrt{\frac{-2}{u\log(1-r)}}\right]\right\}\left(\frac{\mathbb{E}[X_1^2]}{\mu}-1\right)\,,
\end{equation}
where $p^\prime=\frac{r}{r+\mu(1-r)}$ and $u$ is such that $d_{TV}(\mathcal{L}(X_1),\mathcal{L}(X_1+1))\leq1-u$. Note that this parameter $p^\prime$ is chosen such that the mean of the approximating geometric distribution matches that of $W$; on the other hand our choice is made such that each have the same probability of taking the value zero. In the setting where we have no information on the smoothness of the $X_i$ in the form of a bound on $u$, our geometric approximation bound improves upon this for sufficiently large $\mathbb{E}[X_1^2]$, and also applies in the case where $N$ is not geometric. We note also that both geometric approximation results have the desirable property that the upper bound is zero whenever $\mu=1$. Finally, we refer to \cite{p11} for related results on the exponential approximation of random sums in the case where the $X_i$ are no longer assumed to be integer-valued.

\subsection{Application: Compound binomial risk process}\label{subsec:ruin}

We now consider how our Theorem \ref{thm:sum} may be applied to the compound binomial risk process in discrete time \cite{g88}, defined as follows: Let $m\in\mathbb{Z}^+$ and let $\eta,\eta_1,\eta_2,\ldots$ be independent and identically distributed random variables supported on $\mathbb{Z}^+$. The risk process $\{U_t:t=0,1,\ldots\}$ is defined by 
\[
U_t=m+t-\sum_{i=1}^t\eta_i\,,
\] 
where we write $q=\mathbb{E}[\eta]$ and make the assumption that $q<1$ to ensure a positive drift of this process. We will also assume the existence of second and third moments of $\eta$ as required. Letting $\tau=\min\{t\geq1:U_t\leq0\}$, our interest is in the infinite-horizon ruin probability $\psi(m)$ defined by
\begin{equation}\label{ruin_prob_def}
\psi(m)=\mathbb{P}(\tau<\infty|U_0=m).
\end{equation} 
This ruin probability is given by a discrete Pollaczek--Khinchine formula: We have $\psi(0)=q$ and
\begin{equation}\label{gd1}
\psi(m)=\mathbb{P}\left(\sum_{i=1}^MY_i\geq m\right)
\end{equation}
for $m=1,2,\ldots$, where $M\sim\mbox{Geom}(1-q)$ and $Y,Y_1,Y_2,\ldots$ are independent and identically distributed with $\mathbb{P}(Y=j)=q^{-1}\mathbb{P}(\eta>j)$ for $j=0,1,\ldots$; see Proposition 2.6 of \cite{s20}. We note that \cite{s20} gives some numerical approximations for $\psi(m)$, but without any explicit error bounds. Our goal in this section is to give explicit approximations for $\psi(m)$, complete with error bounds, that are significantly easier to work with than the tail probabilities in \eqref{gd1}.

The geometric sum in \eqref{gd1} does not quite fit into the framework of Theorem \ref{thm:sum}, since here the summands are allowed to take the value zero with positive probability. We therefore rewrite this by letting $X,X_1,X_2\ldots$ be independent and identically distributed with 
\[
\mathbb{P}(X=j)=\mathbb{P}(Y=j|Y>0)=\frac{\mathbb{P}(\eta>j)}{q-\mathbb{P}(\eta>0)}
\]
for $j=1,2,\ldots$, and letting $N\sim\mbox{Geom}(r)$ with
\begin{equation}\label{rdef}
r=\frac{1-q}{\mathbb{P}(\eta=0)}\,.
\end{equation}
The following lemma is well known (it follows, for example, from a stronger observation on page 165 of \cite{m87}), but we include its short proof to keep our exposition self-contained.
\begin{lemma}\label{lem:geomsum}
Letting $W=\sum_{i=1}^NX_i$, with $N$ and the $X_i$ as above, we have $\mathcal{L}(W)=\mathcal{L}\left(\sum_{i=1}^MY_i\right)$.
\end{lemma}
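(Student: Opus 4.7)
The key observation is that $Y_i$ can equal $0$ with positive probability $\mathbb{P}(Y = 0) = q^{-1}\mathbb{P}(\eta > 0)$, whereas $X_i \ge 1$ almost surely and, by construction, $X$ has exactly the conditional distribution of $Y$ given $Y > 0$. My strategy is therefore to recognise $\sum_{i=1}^M Y_i$ as a random sum of i.i.d.\ copies of $X$ after discarding the null terms, and then to identify the number of surviving terms as a geometric random variable with the right parameter.

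First I would introduce $N^\star := \#\{1 \le i \le M : Y_i > 0\}$, which, conditionally on $M$, is $\mathrm{Binomial}(M, p^\star)$ with $p^\star := \mathbb{P}(Y > 0)$. Because the zero terms contribute nothing to $\sum_{i=1}^M Y_i$ and, conditional on which indices are nonzero, the surviving $Y_i$'s remain i.i.d.\ with the distribution of $X$, one has
\[
\sum_{i=1}^M Y_i \;\stackrel{d}{=}\; \sum_{j=1}^{N^\star}\widetilde{X}_j\,,
\]
where $\widetilde{X}_1, \widetilde{X}_2, \ldots$ are i.i.d.\ copies of $X$ independent of $N^\star$. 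It thus suffices to show $N^\star \sim \mathrm{Geom}(r)$.

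Next I would carry out a short probability-generating-function computation: from $\mathbb{E}[z^M] = (1-q)/(1-qz)$ and $\mathbb{E}[z^{N^\star}\mid M] = (1 - p^\star + p^\star z)^M$, conditioning on $M$ yields
\[
\mathbb{E}[z^{N^\star}] = \frac{1-q}{1 - q(1 - p^\star + p^\star z)} = \frac{r}{1 - (1-r)z}\,,
\]
where $r := (1-q)/(1 - q(1 - p^\star))$. Substituting $q(1-p^\star) = q\,\mathbb{P}(Y = 0) = \mathbb{P}(\eta > 0) = 1 - \mathbb{P}(\eta = 0)$ gives $r = (1-q)/\mathbb{P}(\eta = 0)$, matching the parameter in \eqref{rdef} and completing the identification.

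No step presents a serious obstacle: the thinning identity for geometric random variables is standard, and the only real care lies in tracking the normalising constants for the laws of $X$ and $Y$. An equivalent but slightly longer route would compare the generating functions of the two random sums directly, using the identity $\sum_{j \ge 0}\mathbb{P}(\eta > j)z^j = (1 - \mathbb{E}[z^\eta])/(1-z)$; I prefer the thinning argument as it produces the distributional identity more transparently.
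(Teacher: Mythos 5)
Your proposal is correct, and it is essentially the paper's argument: both proofs thin the geometric sum $\sum_{i=1}^M Y_i$ by conditioning on which summands are nonzero (binomially distributed given $M$), observing that the surviving summands are i.i.d.\ copies of $X$, and identifying the number of survivors as $\mathrm{Geom}(r)$ with $r=(1-q)/\mathbb{P}(\eta=0)$. The only difference is implementation: the paper carries out the thinning directly on the tail probabilities $\mathbb{P}(\sum_{i=1}^M Y_i > m)$ and sums the resulting double series explicitly, whereas you factor the argument into a distributional thinning identity followed by a short probability-generating-function computation showing $N^\star\sim\mathrm{Geom}(r)$. Your modular version is a little cleaner to read; the paper's is more self-contained in that it never appeals to the PGF characterisation of the geometric law. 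Either is a complete and valid proof.
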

\begin{proof}
Let $K_1,K_2,\ldots$ be independent and identically distributed, with $K_j\sim\mbox{Bin}(j,\mathbb{P}(Y>0))$ having a binomial distribution for each $j=1,2,\ldots$. For $m\geq0$ we write 
\[
\mathbb{P}\left(\sum_{i=1}^MY_i>m\right)=(1-q)\sum_{j=1}^\infty q^j\mathbb{P}(Y_1+\cdots+Y_j>m)\,.
\]
Of the $j$ summands within the latter probability, we have $K_j$ that are non-zero, with the remaining $j-K_j$ taking the value zero. Conditioning on these $K_j$ and noting that those $Y_i$ that are conditioned to be non-zero have the same distribution as $X$, we obtain
\begin{align*}
\mathbb{P}\left(\sum_{i=1}^MY_i>m\right)&=(1-q)\sum_{j=1}^\infty q^j\sum_{k=0}^j\binom{j}{k}\mathbb{P}(Y>0)^k\mathbb{P}(Y=0)^{j-k}\mathbb{P}(X_1+\cdots+X_k>m)\\
&=(1-q)\sum_{k=0}^\infty\left(\sum_{j=k}^\infty\binom{j}{k}[q\mathbb{P}(Y=0)]^j\right)\left(\frac{\mathbb{P}(Y>0)}{\mathbb{P}(Y=0)}\right)^k\mathbb{P}(X_1+\cdots+X_k>m)\\
&=\frac{(1-q)}{1-q\mathbb{P}(Y=0)}\sum_{k=0}^\infty\left(\frac{q\mathbb{P}(Y>0)}{1-q\mathbb{P}(Y=0)}\right)^k\mathbb{P}(X_1+\cdots+X_k>m)\\
&=r\sum_{k=1}^\infty(1-r)^k\mathbb{P}(X_1+\cdots+X_k>m)=\mathbb{P}(W>m)\,,
\end{align*} 
as required.
\end{proof}

In order to state a bound on $\psi(m)$ we will need some further properties of the random variable $X$, which we collect in the following lemma.
\begin{lemma}\label{lem:moments}
With $X$ as above, 
\begin{align*}
\mathbb{E}[X]&=\frac{\mathbb{E}[\eta(\eta-1)]}{2(q-\mathbb{P}(\eta>0))}\,,\qquad
\mathbb{E}[X^2]=\frac{\mathbb{E}[\eta(\eta-1)(2\eta-1)]}{6(q-\mathbb{P}(\eta>0))}\,,
\end{align*}
and
\begin{align*}
d_{TV}(\mathcal{L}(X),\mathcal{L}(X+1))&=\frac{\mathbb{P}(\eta>0)}{2(q-\mathbb{P}(\eta>0))}\,.
\end{align*}
\end{lemma}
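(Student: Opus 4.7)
All three identities are routine computations directly from the given pmf. Writing $c := q - \mathbb{P}(\eta>0)$, so that $\mathbb{P}(X=j) = \mathbb{P}(\eta>j)/c$ for $j \geq 1$, the moments follow from a standard tail-sum manipulation and the total variation identity reduces to a telescoping sum.

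For the moment formulas, I would substitute the pmf and interchange the order of summation in the tail representation $\mathbb{P}(\eta>j) = \sum_{k>j}\mathbb{P}(\eta=k)$ to obtain
\[
\mathbb{E}[X^r] \;=\; \frac{1}{c}\sum_{j=1}^{\infty} j^r\,\mathbb{P}(\eta>j) \;=\; \frac{1}{c}\sum_{k=1}^{\infty}\mathbb{P}(\eta=k)\sum_{j=1}^{k-1}j^r.
\]
Plugging in the closed forms $\sum_{j=1}^{k-1}j = k(k-1)/2$ and $\sum_{j=1}^{k-1}j^2 = (k-1)k(2k-1)/6$ produces $\mathbb{E}[X] = \mathbb{E}[\eta(\eta-1)]/(2c)$ and $\mathbb{E}[X^2] = \mathbb{E}[\eta(\eta-1)(2\eta-1)]/(6c)$, as claimed.

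For the total variation identity, I would apply the summation form from \eqref{dtvdef}. Since $\mathbb{P}(X+1=j) = \mathbb{P}(X=j-1)$ and the pmf of $X$ comes from a survival function, the pointwise difference telescopes: $|\mathbb{P}(X=j) - \mathbb{P}(X=j-1)| = c^{-1}\bigl|\mathbb{P}(\eta>j) - \mathbb{P}(\eta>j-1)\bigr| = c^{-1}\mathbb{P}(\eta=j)$. Summing over $j\geq 1$ gives $c^{-1}\sum_{j\geq 1}\mathbb{P}(\eta=j) = c^{-1}\mathbb{P}(\eta>0)$, and halving yields the stated value $\mathbb{P}(\eta>0)/(2c)$. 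The only nontrivial point in the whole proof is the bookkeeping at the boundary $j=1$ in this TV computation; the remainder is routine summation algebra and presents no real obstacle.
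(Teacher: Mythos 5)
Your computations of $\mathbb{E}[X]$ and $\mathbb{E}[X^2]$ are correct and follow the same tail-sum interchange used in the paper. However, the total variation identity does not come out as you (or the paper) state it, and the boundary you flagged at $j=1$ is not merely ``bookkeeping''---it is precisely where the telescoping argument breaks. Since $X$ is supported on $\{1,2,\ldots\}$, we have $\mathbb{P}(X=0)=0$; the formula $\mathbb{P}(X=j)=c^{-1}\mathbb{P}(\eta>j)$ applies only for $j\geq 1$ and does not extend to $j=0$. Hence the $j=1$ term in $\sum_{j\geq 1}|\mathbb{P}(X=j)-\mathbb{P}(X=j-1)|$ equals $\mathbb{P}(X=1)=c^{-1}\mathbb{P}(\eta>1)$, not $c^{-1}\mathbb{P}(\eta=1)$. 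The remaining terms $j\geq 2$ do telescope and sum to $c^{-1}\mathbb{P}(\eta>1)$, so the total is $2c^{-1}\mathbb{P}(\eta>1)$, giving
\[
d_{TV}(\mathcal{L}(X),\mathcal{L}(X+1))=\frac{\mathbb{P}(\eta>1)}{q-\mathbb{P}(\eta>0)}\,,
\]
which differs from the stated $\mathbb{P}(\eta>0)/\bigl(2(q-\mathbb{P}(\eta>0))\bigr)$ unless $\mathbb{P}(\eta=1)=\mathbb{P}(\eta>1)$. To see concretely that the stated formula fails, take $\eta$ supported on $\{0,1,2\}$ with $\mathbb{P}(\eta=2)>0$: then $c=\mathbb{P}(\eta=2)=\mathbb{P}(\eta>1)$, so $X\equiv 1$ and $X+1\equiv 2$, and the true distance is $1$; the stated expression equals $\mathbb{P}(\eta>0)/(2\mathbb{P}(\eta=2))$, which can exceed $1$. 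The paper's own proof makes the same slip by tacitly applying $\mathbb{P}(X=j)=c^{-1}\mathbb{P}(\eta>j)$ at $j=0$; your proposal replicates it by asserting the $j=1$ boundary poses ``no real obstacle'' without actually carrying out the check.
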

\begin{proof}
For $l=1,2$ we write
\[
\mathbb{E}[X^l]=\frac{1}{q-\mathbb{P}(\eta>0)}\sum_{j=1}^\infty j^l\mathbb{P}(\eta>j)=\frac{1}{q-\mathbb{P}(\eta>0)}\sum_{i=1}^\infty\mathbb{P}(\eta=i)\sum_{j=1}^{i-1}j^l\,,
\]
from which the first two expressions in the lemma follow. For the total variation distance, the definition \eqref{dtvdef} gives
\begin{align*}
d_{TV}(\mathcal{L}(X),\mathcal{L}(X+1))&=\frac{1}{2(q-\mathbb{P}(\eta>0))}\sum_{j=1}^\infty|\mathbb{P}(\eta>j-1)-\mathbb{P}(\eta>j)|\\
&=\frac{1}{2(q-\mathbb{P}(\eta>0))}\sum_{j=1}^\infty\mathbb{P}(\eta=j)\,,
\end{align*}
as required.
\end{proof}
We are now in a position to state the main result of this section.
\begin{theorem}\label{thm:ruin}
With $\psi(m)$ the ruin probability defined in \eqref{ruin_prob_def} above,
\begin{equation}\label{ruin1}
\left(\frac{q-\mathbb{P}(\eta>0)}{\mathbb{P}(\eta=0)}\right)^m\leq\psi(m)\leq\left(\frac{q-\mathbb{P}(\eta>0)}{\mathbb{P}(\eta=0)}\right)^m+\frac{1}{\mathbb{P}(\eta=0)}\left(\frac{1}{2}\mathbb{E}[\eta(\eta-1)]-q+\mathbb{P}(\eta>0)\right)\,,
\end{equation}
and
\begin{equation}\label{ruin2}
\left|\psi(m)-\left(\frac{\mathbb{E}[\eta(\eta-1)]}{2(1-q)+\mathbb{E}[\eta(\eta-1)]}\right)^m\right|\leq\frac{1}{2}\min\left\{1,\frac{(1-q)(1+v)}{\mathbb{P}(\eta=0)}\right\}\left(\frac{\mathbb{E}[\eta(\eta-1)(2\eta-1)]}{3\mathbb{E}[\eta(\eta-1)]}-1\right)\,,
\end{equation}
for $m=1,2,\ldots$, where
\[
v=\sqrt{\frac{-4(q-\mathbb{P}(\eta>0))}{(2q-3\mathbb{P}(\eta>0))\log\left(\frac{q-\mathbb{P}(\eta>0)}{\mathbb{P}(\eta=0)}\right)}}\,.
\]
\end{theorem}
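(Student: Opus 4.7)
The plan is to identify $\psi(m)$ with a tail of a geometric sum via Lemma \ref{lem:geomsum}, then apply the two geometric approximation results of Section \ref{subsec:sums}---namely Theorem \ref{thm:sum} for \eqref{ruin1} and the imported bound \eqref{geomsum2} for \eqref{ruin2}. The common input is that for $Y \sim \mbox{Geom}(p)$ we have $\mathbb{P}(Y \geq m) = (1-p)^m$, so by \eqref{dtvdef},
\[
\bigl| \psi(m) - (1-p)^m \bigr| = \bigl| \mathbb{P}(W \geq m) - \mathbb{P}(Y \geq m) \bigr| \leq d_{TV}\bigl( \mathcal{L}(W), \mbox{Geom}(p) \bigr),
\]
where $W = \sum_{i=1}^N X_i$ is the geometric sum furnished by Lemma \ref{lem:geomsum}.

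For \eqref{ruin1}, I would choose $p = r = (1-q)/\mathbb{P}(\eta = 0)$ as in \eqref{pdef}; since $N \sim \mbox{Geom}(r)$ gives $\mathcal{L}(N^{(0)}) = \mathcal{L}(N+1)$, the sharper clause \eqref{geomsum} of Theorem \ref{thm:sum} applies. A routine calculation using Lemma \ref{lem:moments} and $\mathbb{E}[N] = (1-r)/r$ shows that $p(\mu \mathbb{E}[N] + 1) - 1$ equals $\tfrac{1}{\mathbb{P}(\eta = 0)}\bigl( \tfrac{1}{2}\mathbb{E}[\eta(\eta-1)] - q + \mathbb{P}(\eta > 0) \bigr)$, which delivers the upper bound. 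The lower bound in \eqref{ruin1} does not come from total variation (which would only give $\psi(m) \geq (1-r)^m - d_{TV}$); instead, since each $X_i \geq 1$ almost surely, we have $W \geq N$ pointwise, so $\psi(m) \geq \mathbb{P}(N \geq m) = (1-r)^m$, which is exactly the claimed lower bound after substituting $1 - r = (q - \mathbb{P}(\eta > 0))/\mathbb{P}(\eta = 0)$.

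For \eqref{ruin2}, I would instead invoke \eqref{geomsum2} with $p' = r / (r + \mu(1-r))$. Algebra using Lemma \ref{lem:moments} reduces $1 - p'$ to $\mathbb{E}[\eta(\eta-1)] / (2(1-q) + \mathbb{E}[\eta(\eta-1)])$, so $(1-p')^m$ matches the approximation on the left of \eqref{ruin2}. Reading $u$ off the total variation formula in Lemma \ref{lem:moments} gives $u = (2q - 3\mathbb{P}(\eta > 0))/(2(q - \mathbb{P}(\eta > 0)))$, so $\sqrt{-2/(u\log(1-r))}$ collapses to the stated $v$, and $\mathbb{E}[X^2]/\mu$ becomes $\mathbb{E}[\eta(\eta-1)(2\eta-1)]/(3\mathbb{E}[\eta(\eta-1)])$. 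Substituting into \eqref{geomsum2} then produces the right-hand side of \eqref{ruin2}. The only genuinely non-routine point is noticing that the lower bound in \eqref{ruin1} requires the pathwise inequality $W \geq N$ rather than a two-sided total variation estimate; everything else is careful bookkeeping translating between $X$-parameters and $\eta$-parameters via Lemma \ref{lem:moments}.
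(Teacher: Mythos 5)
Your proof is correct and follows essentially the same route as the paper: identify $\psi(m)=\mathbb{P}(W\geq m)$ via Lemma \ref{lem:geomsum}, apply \eqref{geomsum} from Theorem \ref{thm:sum} for \eqref{ruin1} and the imported bound \eqref{geomsum2} for \eqref{ruin2}, and translate through Lemma \ref{lem:moments}. Your pathwise observation that $W\geq N$ (hence $\psi(m)\geq(1-r)^m$) is the same stochastic-dominance argument the paper uses for the lower bound in \eqref{ruin1}, just made explicit via the coupling.
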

\begin{proof}
From Lemma \ref{lem:geomsum} and the Pollaczeck--Khinchine formula \eqref{gd1} we may write that $\psi(m)=\mathbb{P}(W\geq m)$ for $m=1,2,\ldots$. With this representation, we employ the bound \eqref{geomsum} of Theorem \ref{thm:sum} in conjunction with the formula for $\mathbb{E}[X]$ in Lemma \ref{lem:moments} to obtain that
\[
\left|\psi(m)-\left(\frac{q-\mathbb{P}(\eta>0)}{\mathbb{P}(\eta=0)}\right)^m\right|\leq\frac{1}{\mathbb{P}(\eta=0)}\left(\frac{1}{2}\mathbb{E}[\eta(\eta-1)]-q+\mathbb{P}(\eta>0)\right)\,.
\]
The proof of \eqref{ruin1} is completed by noting that $W$ is stochastically larger than our approximating geometric random variable, and so $\mathbb{P}(W\geq m)\geq\left(\frac{q-\mathbb{P}(\eta>0)}{\mathbb{P}(\eta=0)}\right)^m$. 

For \eqref{ruin2} we will use \eqref{geomsum2}. In line with the notation there, and using the total variation distance given in Lemma \ref{lem:moments}, we write $1-u=\frac{\mathbb{P}(\eta>0)}{2(q-\mathbb{P}(\eta>0))}$, so that
\[
u=\frac{2q-3\mathbb{P}(\eta>0)}{2(q-\mathbb{P}(\eta>0))}\,,
\]  
and so, with $r$ given by \eqref{rdef}, 
\[
v=\sqrt{\frac{-2}{u\log(1-r)}}\,.
\]
The result then follows from \eqref{geomsum2} and Lemma \ref{lem:moments}.
\end{proof}
Note that, since they are based on geometric approximation of a geometric sum, we would expect the bounds in Theorem \ref{thm:ruin} to perform well when the summands $X$ are typically not much larger than one, that is, when $\mathbb{P}(\eta>2)$ is small. To conclude this section, we illustrate this point and the bounds of Theorem \ref{thm:ruin} with three examples.
\begin{example}
Suppose that $\eta$ is supported on $\{0,1,2\}$, with $\mathbb{P}(\eta=0)=\alpha_0$, $\mathbb{P}(\eta=2)=\alpha_2$, and $\mathbb{P}(\eta=1)=1-\alpha_0-\alpha_2$. We will assume that $\alpha_2<\alpha_0$ to ensure that $q<1$. In this case, both the bounds of Theorem \ref{thm:ruin} identify that $\psi(m)=(\alpha_2/\alpha_0)^m$. We note, though, that \eqref{ruin1} applies for all $\alpha_2\in[0,\alpha_0)$, while the upper bound of \eqref{ruin2} requires $\alpha_2>0$ to be well-defined. In this example the expression for $\psi(m)$ also follows more directly from the observation that $(U_0,U_1,\ldots)$ is a random walk on the integers with initial value $U_0=m$, and independent increments supported on $\{-1,0,1\}$ and distributed as $1-\eta$.
\end{example}
\begin{example}
In the setting where $\eta\sim\mbox{Geom}(\alpha)$ has a geometric distribution, with $\alpha>1/2$ to ensure that $q<1$, it is known that 
\[
\psi(m)=\left(\frac{1-\alpha}{\alpha}\right)^{m+1}\,.
\]
See Section 2 of \cite{s20}. Neither of the estimates in Theorem \ref{thm:ruin} yield exactly this value; the approximations given by \eqref{ruin1} and \eqref{ruin2} are
\[
\psi(m)\approx\left(\frac{1-\alpha}{\alpha}\right)^{2m}\,,\;\text{ and }\;\psi(m)\approx\left(\frac{(1-\alpha)^2}{3\alpha^2-3\alpha+1}\right)^m\,,
\] 
respectively. The first of these gives the correct value for $m=1$, but the second is a better approximation for larger $m$.
\end{example}
\begin{example}
Suppose $\eta$ has a mixed Poisson distribution. That is, given some non-negative random variable $\xi$, suppose that $\eta|\xi\sim\mbox{Pois}(\xi)$ has a Poisson distribution. We will illustrate our bound \eqref{ruin1} in this case; the bound \eqref{ruin2} may be evaluated similarly, but we omit that to keep the discussion concise. We have that $q=\mathbb{E}[\xi]$, which we assume is less than 1. We also have $\mathbb{P}(\eta=0)=\mathbb{E}[e^{-\xi}]$, and $\mathbb{E}[\eta(\eta-1)]=\mathbb{E}[\xi^2]$. Hence, the approximation error in \eqref{ruin1}, i.e., the final term on the right-hand side, is
\begin{equation}\label{eg:pois}
\frac{1}{\mathbb{P}(\eta=0)}\left(\frac{1}{2}\mathbb{E}[\eta(\eta-1)]-q+\mathbb{P}(\eta>0)\right)=\frac{1}{\mathbb{E}[e^{-\xi}]}\left(\frac{1}{2}\mathbb{E}[\xi^2]-\mathbb{E}[\xi]+1-\mathbb{E}[e^{-\xi}]\right)\,.
\end{equation}
For example, if $\eta\sim\mbox{Pois}(\lambda)$ has a Poisson distribution for some small $\lambda>0$, writing $e^{\pm\lambda}\approx1\pm\lambda$, this error is approximately $\frac{1}{2}\lambda^2(1+\lambda)$. The approximation error \eqref{eg:pois} in this case is illustrated in Figure \ref{fig:2}.

\begin{figure}
\begin{center}
\includegraphics[scale=0.5]{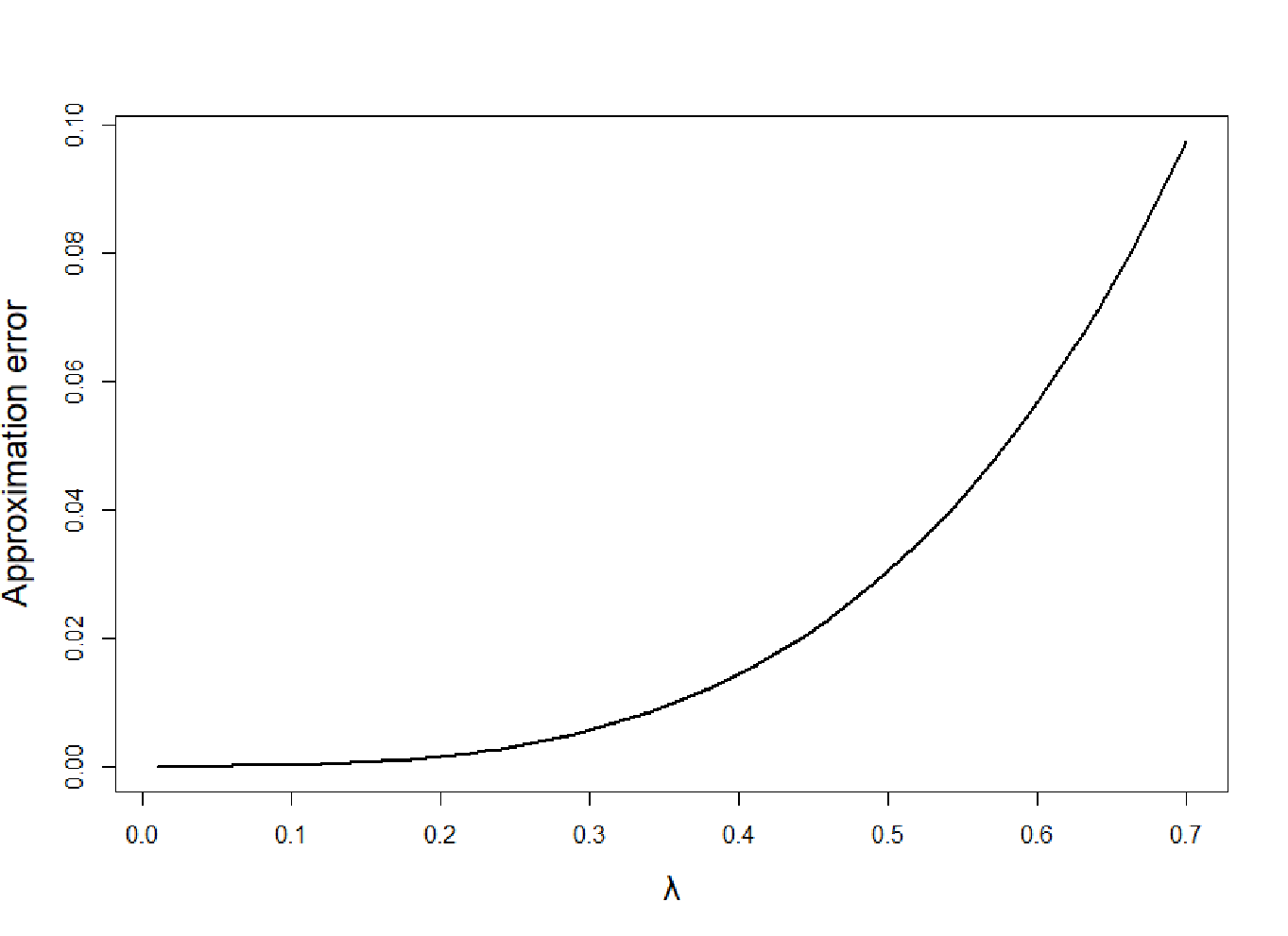}
\end{center}
\caption{The approximation error \eqref{eg:pois} as a function of $\lambda$}\label{fig:2}
\end{figure} 

As a second example, suppose instead that $\xi$ has a gamma distribution with density $x\mapsto\frac{\beta^\alpha}{\Gamma(\alpha)}x^{\alpha-1}e^{-\beta x}$ for $x>0$, where $\alpha>0$ and $\beta>0$ are parameters, so that $\eta$ has a negative binomial distribution. Then our approximation error is 
\begin{equation}\label{eg:gamma}
\left(1+\frac{1}{\beta}\right)^\alpha\left(\frac{\alpha(\alpha+1)}{2\beta^2}-\frac{\alpha}{\beta}+1\right)-1\,.
\end{equation} 
This converges to zero as $\alpha\to0$ for fixed $\beta$, or as $\beta\to\infty$ for fixed $\alpha$, for example. This can be seen on Figure \ref{fig:3}, which shows the approximation error \eqref{eg:gamma} as a function of $\beta$ for various values of $\alpha$.

\begin{figure}
\begin{center}
\includegraphics[scale=0.5]{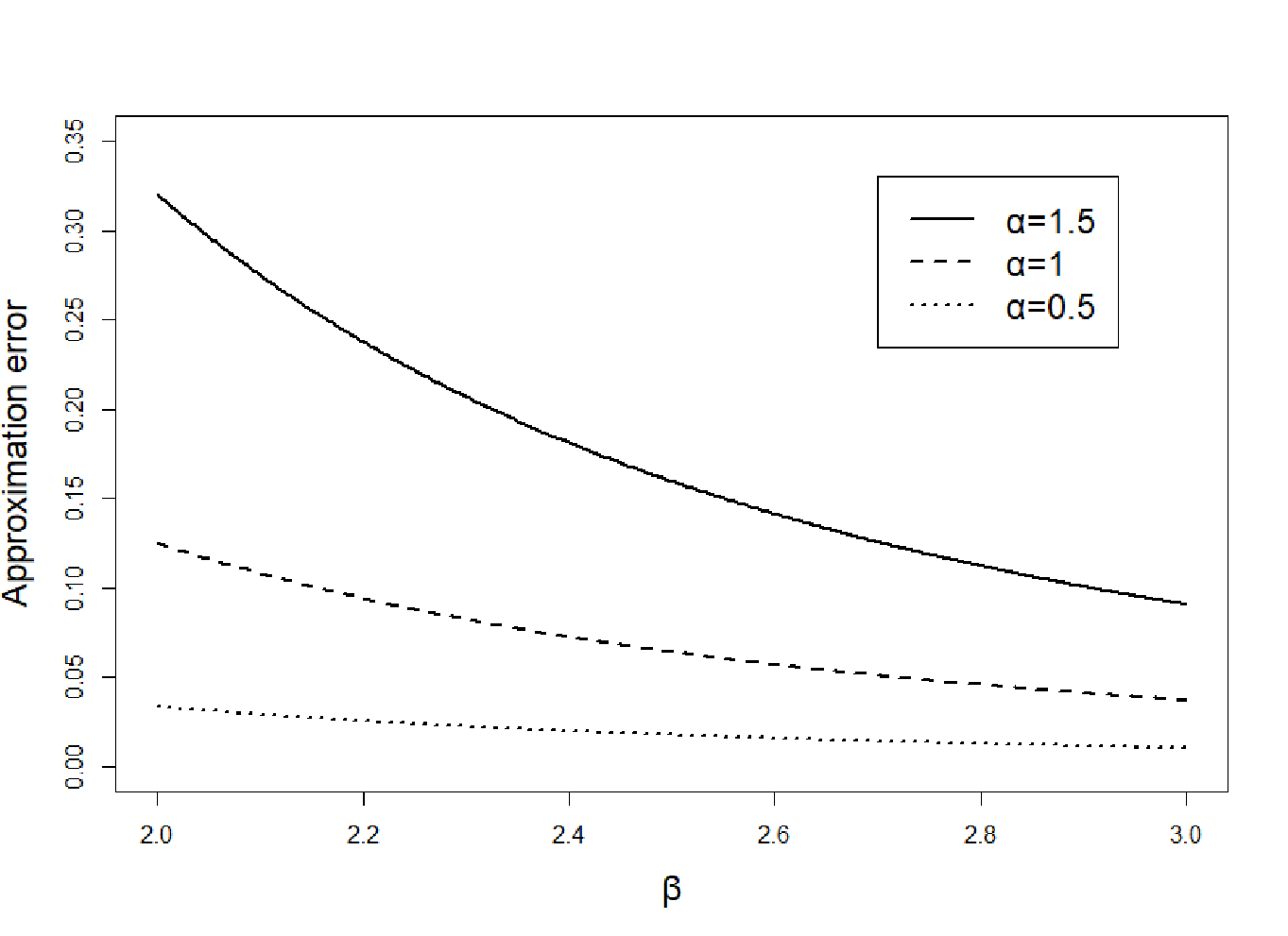}
\end{center}
\caption{The approximation error \eqref{eg:gamma} as a function of $\beta$ for $\alpha=0.5, 1, 1.5$}\label{fig:3}
\end{figure}
\end{example}


\vspace{12pt}
\noindent\textbf{Acknowledgements:}
F. Daly thanks the Belgian FNRS for financial support for a visit to the Universit\'e Libre de Bruxelles where this work started. C. Lef\`evre has received support from the {\it DIALog Research Chair} under the aegis of the Risk Foundation, an initiative of CNP. The authors thank an Editor and two anonymous reviewers for their careful readings of an earlier version of the paper and suggestions which improved the presentation of the work.

 
\bibliographystyle{harvard}

\begin{thebibliography}{99}

\bibitem{a02} Asmussen, S., Avram, F. and Usabel, M. (2002). Erlangian approximations for finite-horizon ruin probabilities. \emph{Astin Bull.} {\bf 32}, 267--281.

\bibitem{b95} Barbour, A. D. and Gr\"ubel, R. (1995). The first divisible sum. \emph{J. Theoret. Probab.} {\bf 8}, 39--47. 

\bibitem{d10} Daly, F. (2010). Stein's method for compound geometric approximation. \emph{J. Appl. Probab.} {\bf 47}, 146--156. 

\bibitem{d16} Daly, F. (2016). Compound geometric approximation under a failure rate constraint. \emph{J. Appl. Probab.} {\bf 53}, 700--714. 

\bibitem{d19} Daly, F. (2019). On strong stationary times and approximation of Markov chain hitting times by geometric sums. \emph{Statist. Probab. Lett.} {\bf 150}, 74--80.

\bibitem{g88} Gerber, H. U. (1988). Mathematical fun with the compound binomial process. \emph{Astin Bull.} {\bf 18}, 161--168.

\bibitem{m87} Michel, R. (1987). An improved error bound for the compound Poisson approximation of a nearly homogeneous portfolio. \emph{Astin Bull.} {\bf 17}, 165--169.

\bibitem{p96} Pek\"oz, E. A. (1996). Stein's method for geometric approximation. \emph{J. Appl. Probab.} {\bf 33}, 707--713.

\bibitem{p11} Pek\"oz, E. A. and R\"ollin, A. (2011). New rates for exponential approximation and the theorems of R\'enyi and Yaglom. \emph{Ann. Probab.} {\bf 39}, 587--608.

\bibitem{p13} Pek\"oz, E. A., R\"ollin, A. R. and Ross, N.  (2013). Total variation error bounds for geometric approximation. \emph{Bernoulli} {\bf 19}, 610--632.

\bibitem{r07} R\"ollin, A. R. (2007). Translated Poisson approximation using exchangeable pair couplings. \emph{Ann. Appl. Probab.} {\bf 17}, 1596--1614.

\bibitem{r11} Ross, N. (2011). Fundamentals of Stein’s method. \emph{Probab. Surv.} {\bf 8}, 210--293.

\bibitem{r13} Ross, N. (2013). Power laws in preferential attachment graphs and Stein's method for the negative binomial distribution. \emph{Adv. Appl. Probab.} {\bf 45}, 876--893.

\bibitem{s20} Santana, D. J. and Rinc\'on, L. (2020). Approximations of the ruin probability in a discrete time risk model. \emph{Mod. Stoch. Theory Appl.} {\bf 7}, 221--243.

\bibitem{s07} Shaked, M. and Shanthikumar, J. G. (2007). \emph{Stochastic Orders}. Springer, New York.

\bibitem{s72} Stein, C. (1972). A bound for the error in the normal approximation to the distribution of a sum of dependent random variables. In \emph{Proc. Sixth Berkeley Symp. Math. Statis. Prob.} vol. 2, Univ. California Press, Berkeley, 583--602.

\bibitem{v96} Vellaisamy, P. and Chaudhuri, B. (1996). Poisson and compound Poisson approximations for random sums of random variables. \emph{J. Appl. Probab.} {\bf 33}, 127--137.

\bibitem{y23} Yu, L., Daly, F. and Johnson, O. (2023). A negative binomial approximation in group testing. \emph{Probab. Engrg. Inform. Sci.} {\bf 37}, 973--996.

\end{thebibliography}

\end{document}